\documentclass{amsart}

\usepackage{color}
\usepackage{soul}
\usepackage{amsmath, amssymb}

\usepackage{hyperref}
\hypersetup{
    colorlinks=true, 
    linktoc=all,     
    linkcolor=blue,  
}

\newcommand{\dotieconcat}[2]{
  \text{\raisebox{.8ex}{$\smallfrown$}}%
}

\newcommand{\Q}{\mathbb{Q}}
\newcommand{\C}{\mathbb{C}}

\newcommand{\N}{\mathbb{N}}

\newcommand{\cstar}{$\mathrm{C}^*$}

\newbox\gnBoxA
\newdimen\gnCornerHgt
\setbox\gnBoxA=\hbox{$\ulcorner$}
\global\gnCornerHgt=\ht\gnBoxA
\newdimen\gnArgHgt
\def\Quinequote #1{%
    \setbox\gnBoxA=\hbox{$#1$}%
    \gnArgHgt=\ht\gnBoxA%
    \ifnum     \gnArgHgt<\gnCornerHgt \gnArgHgt=0pt%
    \else \advance \gnArgHgt by -\gnCornerHgt%
    \fi \raise\gnArgHgt\hbox{$\ulcorner$} \box\gnBoxA %
    \raise\gnArgHgt\hbox{$\urcorner$}}

\DeclareMathOperator{\spec}{sp}

\newcommand{\Cr}{C^*_r(G)}
\newcommand{\Qi}{\Q(i)}

\theoremstyle{plain}
\newtheorem{theorem}{Theorem}[section]
\newtheorem{lemma}[theorem]{Lemma}

\newtheorem{question}[theorem]{Question}
\newtheorem{fact}[theorem]{Fact}

\theoremstyle{definition}
\newtheorem{definition}[theorem]{Definition}

\numberwithin{equation}{section}

\begin{document}
\title[Computable presentations of RD group \cstar-algebras]{Computable presentations of reduced group \cstar-algebras of groups with the rapid decay property}

\author{Isaac Goldbring}
\address{University of California, Irvine}
\email{igoldbri@uci.edu}
\urladdr{https://www.math.uci.edu/~isaac}

\begin{abstract}
We show that whenever $G$ is a finitely generated group with the rapid decay property (property RD), then the norm on the reduced group \cstar-algebra $C^*_r(G)$ is computable if (and only if) $G$ has solvable word problem.  We also give an example of a finitely presented group $G$ with solvable word problem for which the norm on $C^*_r(G)$ is not computable.
\end{abstract}
\maketitle

\section{Introduction}\label{sec:Intro}

In \cite{canyou}, Fritz, Netzer, and Thomas began the study of computable presentations of group \cstar-algebras.  One of the results that they showed was that if $G$ is a finitely generated group with solvable word problem, then the norm $\|\cdot\|_r$ on $C^*_r(G)$ is \emph{left c.e.}, meaning that there is an algorithm such that, upon input $a\in C^*_r(G)$, produces a sequence of lower bounds converging to $\|a\|_r$.  (We will be a bit more precisely about what exactly this means in a moment.)  It is an interesting question as to when this conclusion can be upgraded to \emph{computability} of the norm, meaning that there is also an algorithm for producing a sequence of upper bounds converging to $\|a\|_r$.  The authors of \cite{canyou} establish such an algorithm for finitely presented \emph{amenable groups} with solvable word problem; the amenability assumption allows one to use the fact that a computable sequence of converging upper bounds for the universal group \cstar-algebra $C^*(G)$ (which always exists in the finitely presented case) can be used to give computable upper bounds for $C^*_r(G)$ as well.  We note that the assumption of solvable word problem is necessary:  if the norm on $C^*_r(G)$ is computable, it follows easily that $G$ has solvable word problem.  

In the author's recent preprint \cite{goldbring}, he showed that the group \cstar-algebras $C^*_r(G)$ for $G$ a hyperbolic group are computable using a similar idea to the amenable case, this time using an amenable action of the group on its Gromov boundary.

In this note, we extend this latter result to a larger class of groups, namely those with the \emph{rapid decary property} (or \emph{property RD}) (see the next section for a definition):

\begin{theorem}
If $G$ is a finitely generated group with solvable word problem and property RD, then the norm on $C^*_r(G)$ is computable.
\end{theorem}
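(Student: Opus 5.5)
Since the norm is already left c.e. by the result of Fritz, Netzer, and Thomas \cite{canyou}, the task is to produce, uniformly in $a$, a computable sequence of upper bounds converging to $\|a\|_r$. We work with $a\in \Qi[G]$, since these elements generate $C^*_r(G)$ densely. The idea is to apply the RD inequality not to $a$ itself but to high powers of $a^*a$, and then take roots. Property RD gives constants $C,s>0$ such that for every $f\in \C[G]$ supported in the ball $B_n$ of radius $n$ with respect to the word length $|\cdot|$ (for a fixed finite generating set),
\[
\|f\|_r\le C(1+n)^s\|f\|_2 .
\]
The constants $C,s$ need not be computable. We only need that they exist, because we will use the bound only asymptotically, and we may fix rational values of them in the algorithm.

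First, fix $a\in \Qi[G]$ with support in $B_m$ and set $b=a^*a$. Then $b^k$ is supported in $B_{2mk}$. Using the solvable word problem, we can compute $b^k$ exactly as an element of $\Qi[G]$: we multiply out the formal products of words and identify group elements by deciding equality of words. Hence $\|b^k\|_2$ is computable, or more precisely its square is a computable rational. By the C$^*$-identity and RD,
\[
\|a\|_r^{2}=\|b^k\|_r^{1/k}\le C^{1/k}(1+2mk)^{s/k}\|b^k\|_2^{1/k}.
\]

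Second, we bound $\|b^k\|_2^{1/k}$ from the other side. Since $\|b^k\|_2=\|b^k\delta_e\|_2\le\|b^k\|_r=\|a\|_r^{2k}$, we have $\|b^k\|_2^{1/k}\le \|a\|_r^2$. So $\|b^k\|_2^{1/k}$ is a computable lower bound for $\|a\|_r^2$. Also, the upper bounds $u_k=C^{1/k}(1+2mk)^{s/k}\|b^k\|_2^{1/k}$ satisfy $u_k\le C^{1/k}(1+2mk)^{s/k}\|a\|_r^2\to\|a\|_r^2$, since $C^{1/k}(1+2mk)^{s/k}\to1$. So $\|b^k\|_2^{1/k}\to\|a\|_r^2$ as well, and the $u_k$ form upper bounds converging to $\|a\|_r^2$. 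They are computable reals uniformly in $k$ and $a$, given the fixed rational values of $C$ and $s$, which are hard-coded into the algorithm. Taking square roots, and approximating rationally from above, yields the desired upper bounds for $\|a\|_r$. Combined with the lower bounds, this shows $\|\cdot\|_r$ is computable on the dense set $\Qi[G]$, which is what computability of the presentation requires.

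The main obstacle is mostly bookkeeping: checking that the definition of a computable presentation used in the paper only needs computability of the norm on rational polynomials in the generators, with hard-coded finite constants allowed (a nonuniform algorithm). If instead uniformity in $G$ were demanded, the unknown constants $C,s$ would be the problem. In that case one could try to dispense with them by noting that $\|b^k\|_2^{1/k}$ alone converges to $\|a\|_r^2$ from below, but this gives no upper bound. So the nonuniform use of the RD constants is essential, and I would state it explicitly.
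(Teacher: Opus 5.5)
Your proof is correct, and it takes a genuinely different and more elementary route than the paper.

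\textbf{The paper's route.} It enumerates upper bounds by an unbounded search for certificates $(t,w)$ with $t^2+C\|t^2-x^*x-w^*w\|_{H^s}<q^2$. Soundness follows at once from RD. Showing that the search always terminates, however, needs two nontrivial inputs:
\begin{itemize}
\item Fact~\ref{fact2} (Connes--Jolissaint stability of $H^\infty(G)$ under holomorphic functional calculus), to get $(t^2-x^*x)^{1/2}\in H^\infty(G)$. That element is then approximable in $\|\cdot\|_{H^s}$ by elements of $\Qi[G]$.
\item Fact~\ref{fact1} (Lafforgue's theorem that $H^s(G)$ is a Banach algebra), to control $\|v^*v-w^*w\|_{H^s}$.
\end{itemize}

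\textbf{Your route.} You apply RD directly to the powers $b^k=(a^*a)^k$. Their supports grow only linearly in $k$, so the polynomial factor $C(1+2mk)^s$ disappears under the $k$-th root. The inequality $\|b^k\|_2\le\|b^k\|_r=\|a\|_r^{2k}$ supplies the matching lower bound. Note that $\|b^k\|_r=\|b\|_r^k$ for all $k$ uses $b\ge 0$ (norm equals spectral radius), not just the C$^*$-identity.

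\textbf{What your route buys.}
\begin{itemize}
\item It uses nothing beyond the RD inequality, with no appeal to Facts~\ref{fact1} or~\ref{fact2}.
\item The lower bounds $\|b^k\|_2^{1/(2k)}$ come for free, so you do not even need the Fritz--Netzer--Thom result.
\item The ratio of your upper to lower bounds is the explicit quantity $(C(1+2mk)^s)^{1/k}$. Given $\epsilon$ and the a priori bound $\|a\|_r\le\|a\|_1$, you can therefore compute in advance a $k$ that suffices. This replaces an unbounded search that has no a priori running-time bound with a single bounded computation.
\item With $C,s\in\N$, the upper-bound test is the exact rational comparison $C^2(1+2mk)^{2s}\|b^k\|_2^2<q^{4k}$.
\end{itemize}

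\textbf{Uniformity.} Your algorithm depends on the same data as the paper's ``moreover'' clause: a word-problem algorithm and a pair of RD constants. Your remark on nonuniformity is therefore exactly on target.

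\textbf{What the paper's route buys.} Its advantage is mainly conceptual. It fits the positivity-certificate template used for universal group \cstar-algebras and in the amenable and hyperbolic cases, where each upper bound is witnessed by a decomposition $t^2-x^*x=w^*w+r$. Your estimate is a direct, quantitative instance of the spectral-radius phenomenon that Fact~\ref{fact2} packages abstractly.
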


The motivation for studying these groups in regards to computable presentations of the reduced group \cstar-algebra is obvious:  property RD allows one to bound the the norm in the reduced group \cstar-algebra \emph{from above} by another norm which is obviously computable (when the group has solvable word problem).  The actual implementation inovlves some nontrivial facts about groups with property RD, but the basic idea is that mentioned in the previous sentence.  

Groups with property RD abound and, by results of Jolissaint \cite{jolissaint} and de la Harpe \cite{DLH}, they include the hyperbolic groups in our earlier preprint.  Groups with property RD have been useful in a number of arenas and we refer the reader to the survey \cite{chatterji} for an introduction to the notion, a list of examples (and non-examples), and some sample applications.  Groups with property RD have been of interest lately in connection with the classification programme for nuclear \cstar-algebras, beginning with the recent fundamental result of \cite{strict} showing that $C^*_r(\mathbb{F}_2)$ has the strict comparison property (where property RD for $\mathbb{F}_2$, a result due to Haagerup \cite{haagerup}, played a major role).

It occurred to the author that there seemed to be no known example of a finitely generated group with solvable word problem for which the norm on the reduced group \cstar-algebra is not computable.  In the last section, we offer one such example.  The group is not ``natural'' in any combinatorial or geometric sense but instead comes from a trick encoding the solvability of the halting problem into a direct sum and also involves Kester's characterization of amenability for groups in terms of simple random walks on the group.

We thank Thomas Sinclair for many useful discussions about this work.

\section*{AI Disclosure Statement}

The idea that groups with property RD should have computable reduced group \cstar-algebras is due to the author (who was familiar with these groups in connection with the strict comparison result mentioned above) as is the basic algorithm for computing upper bounds.  However, the author struggled with the completeness argument for the algorithm and consulted Claude Opus 5 for direction.  It was here that Claude pointed the author to the missing ingredient, namely Connes' result about spectral invariance of the algebra of rapidly decreasing functions in the reduced group \cstar-algebra (Fact \ref{fact2} below).  The article was written by the author and he takes full responsibility for the correctness of the arguments. 

\section{Proof of the main theorem}

Throughout, we let $G$ denote a group with a finite symmetric generating set $S$, word length $\ell$, and left regular representation $\lambda$ on $\ell^2(G)$. We identify $\mathbb{C}[G]$ with the set of finitely supported functions $f:G\to \mathbb{C}$.  The reduced group \cstar-algebra of $G$ is $\Cr=\overline{\lambda(\C[G])}^{\|\cdot\|}$. For $x\in \mathbb{C}[G]$, we set $\|x\|_r:=\|\lambda(x)\|$.

To keep the paper short, we will not give the official definition of a presentation of a \cstar-algebra; indeed, since we only consider so-called ``standard'' presentations of reduced group \cstar-algebras, we can work with a more elementary approach.  Consequently, we say that the norm on $C^*_r(G)$ is \emph{computable} if there is an algorithm such that, upon input finitely many $c_1,\ldots,c_n\in \mathbb{Q}(i)$ and finitely many words $w_1,\ldots,w_n\in S^*$ (where, following typical notation in automata theory, $S^*$ denotes the set of words on the alphabet $S$), and a rational $\epsilon>0$, returns an interval $I\subseteq \mathbb{R}$ with $|I|<\epsilon$ for which $\|\sum_{i=1}^n c_i\bar w_i\|_r\in I$; here, $\bar w_i$ denotes the element of $G$ corresponding to $w_i$.  In the sequel, we might be a bit more casual and simply speak of inputting an element of $\mathbb{Q}(i)[G]$ into an algorithm instead of the more formal description given in the previous sentence.

We now begin describing the space of rapidly decreasing functions on a group.

\begin{definition}\label{def:rd}
For $s\ge0$ and $f\in \mathbb{C}[G]$, set
\[
\|f\|_{H^s}:=\left(\sum_{g\in G}|f(g)|^2\big(1+\ell(g)\big)^{2s}\right)^{1/2}.
\]
\end{definition}

The following lemma is clear but important for the proof of the main theorem:

\begin{lemma}\label{sobolevcomp}
If $G$ has solvable word problem, then there is an algorithm such that, upon input $x\in \mathbb{Q}(i)[G]$ and $s\in \N$, computes the rational number $\|x\|_{H^s}^2$.
\end{lemma}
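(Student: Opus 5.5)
The plan is to reduce the computation of $\|x\|_{H^s}^2$ to a finite, exact rational computation. The input $x$ is given formally as a finite list of pairs $(c_i,w_i)$ with $c_i\in\Q(i)$ and $w_i\in S^*$. The element of $\Q(i)[G]$ it represents is $\sum_i c_i\bar w_i$. Several distinct words may represent the same group element, so the first task is to identify which ones do. I will then compute the coefficient of each group element in the support, and the word length of each such element. Once these are in hand, the formula in Definition \ref{def:rd} becomes a finite sum of rationals.

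First, I identify coincident words. For each pair $i,j$, I use the solution to the word problem to decide whether $w_iw_j^{-1}$ represents the identity, that is, whether $\bar w_i=\bar w_j$. Here $w_j^{-1}$ is the formal inverse word, which makes sense because $S$ is symmetric. This partitions $\{1,\ldots,n\}$ into classes, one for each distinct group element $g$ appearing. For each class I set $f(g)=\sum_{i\in\text{class}} c_i\in\Q(i)$. This gives $x(g)$ exactly, and $x(g)=0$ for every $g$ outside these finitely many elements. Each $|f(g)|^2=a^2+b^2$, where $f(g)=a+bi$, is then a rational number computed exactly.

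Next, I compute $\ell(g)$ for each class representative $g=\bar w_i$. Since $\ell(g)\le |w_i|$, I enumerate all words $v\in S^*$ with $|v|\le |w_i|$, of which there are finitely many. For each $v$, I use the word problem to test whether $vw_i^{-1}$ represents the identity. The least length of such a $v$ is $\ell(g)$. This search terminates because $w_i$ itself qualifies.

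Finally, since $s\in\N$, each factor $(1+\ell(g))^{2s}$ is a natural number. The output is
\[
\|x\|_{H^s}^2=\sum_{g}|f(g)|^2(1+\ell(g))^{2s},
\]
a finite sum of rationals, computed exactly. There is no real obstacle in this argument. The only points requiring care are merging coefficients of words that represent the same element, which matters because cancellation can make some $f(g)=0$; that is harmless. The other is that the hypothesis $s\in\N$ is what keeps the output rational rather than merely computable.
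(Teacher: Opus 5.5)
Your proof is correct, and it is the routine argument the paper has in mind when it calls this lemma ``clear'' (the paper gives no written proof). You use the word problem to merge the coefficients of words representing the same group element, compute each $\ell(g)$ by a finite search over words of length at most $|w_i|$, and sum the resulting rationals, with $s\in\N$ guaranteeing that the output is rational.
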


\begin{definition}
We say that $G$ has \emph{property RD} if there exist $C>0$ and $s\ge0$ so that, for all $f\in \mathbb{C}[G]$, we have
\begin{equation}\label{eq:rd}
\|f\|_r\ \le\ C\,\|f\|_{H^s}.
\end{equation}
Replacing $(C,s)$ by $(\lceil C\rceil,\lceil s\rceil)$ preserves \eqref{eq:rd}, so we assume throughout that $C\in\N$ and $s\in\N$.  We call the pair $(C,s)$ a pair of \emph{RD constants} for $G$.
\end{definition}

In general, for $f\in \mathbb{C}[G]$, we have $\|f\|_2\le\|f\|_{H^s}$.  As a result, the inclusion $\C[G]\subseteq\ell^2(G)$ extends to an injection of the $\|\cdot\|_{H^s}$-completion of $G$ into $\ell^2(G)$; this completion is denoted $H^s(G)$ and, under the above identification, we have
\[
H^s(G)=\Big\{f:G\to\C\ \Big|\ \textstyle\sum_{g\in G}|f(g)|^2(1+\ell(g))^{2s}<\infty\Big\}.
\] 

We set $H^\infty(G):=\bigcap_{s\in\N}H^s(G)$, the space of so-called \emph{rapidly decreasing functions on $G$}.  We equip $H^\infty(G)$ with the Fr\'echet topology defined by the family of seminorms $\|\cdot\|_{H^s}$.  If $G$ has property RD, then \eqref{eq:rd} implies that there is a natural (continuous) embedding $H^\infty(G)\hookrightarrow\Cr$ with dense range.  In fact, viewing $C^*_r(G)$ as a subspace of $\ell^2(G)$, we have $H^\infty(G)\subseteq C^*_r(G)$ (literal inclusion of functions on $G$).  (This is in fact an equivalent characterization of having property RD; see \cite[Proposition 4.1]{chatterji}.)

We will need the following two important facts about $H^\infty(G)$ when $G$ has property RD.  The first is essentially due to Jolissaint (see \cite[Lemma 1.2.4]{jolissaint}) although we use a slightly stronger form due to Lafforgue \cite{lafforgue}:

\begin{fact}\label{fact1}
Suppose that $G$ has property RD with RD constants $(C,s)$.  Then $H^s(G)$ is a Banach subalgebra of $C^*_r(G)$.  
\end{fact}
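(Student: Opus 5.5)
The plan is to show that $H^s(G)$ is closed under convolution, with an estimate $\|f*g\|_{H^s}\le K\|f\|_{H^s}\|g\|_{H^s}$ for some constant $K$ depending only on $(C,s)$. Together with \eqref{eq:rd} and the continuous inclusion $H^s(G)\subseteq \Cr$ noted above, this makes $H^s(G)$ a subalgebra of $\Cr$ that is complete in a norm dominating $\|\cdot\|_r$; rescaling the norm by $K$ then gives a genuine Banach algebra. So it suffices to prove the convolution estimate for $f,g\in\C[G]$ and extend by density.

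The key step uses subadditivity of word length, $\ell(x)\le \ell(y)+\ell(y^{-1}x)$, which gives
\[
(1+\ell(x))^s\le 2^{s}\bigl((1+\ell(y))^s+(1+\ell(y^{-1}x))^s\bigr).
\]
Write $w_s(x)=(1+\ell(x))^s$. We then have pointwise
\[
w_s(x)\,|f*g|(x)\le 2^s\Big(\big((w_sf)*|g|\big)(x)+\big(|f|*(w_sg)\big)(x)\Big),
\]
with absolute values taken throughout. This reduces the claim to bounding $\|F*g\|_2$ and $\|f*F'\|_2$, where $F=w_s|f|$ and $F'=w_s|g|$ are $\ell^2$-functions with $\|F\|_2=\|f\|_{H^s}$ and $\|F'\|_2=\|g\|_{H^s}$.

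For the first term, $\|F*|g|\|_2=\|\lambda(F)\,|g|\|_2$ for the right-acting version, or by symmetry $\le \| |g|\|_r\cdot\|F\|_2$ using right multiplication. Note that $\|h\|_r$ also bounds right convolution by $h$, via the antiunitary $\delta_x\mapsto\delta_{x^{-1}}$ and the fact that $\ell(x^{-1})=\ell(x)$. By \eqref{eq:rd},
\[
\||g|\|_r\le C\||g|\|_{H^s}=C\|g\|_{H^s}.
\]
Hence $\|F*|g|\|_2\le C\|f\|_{H^s}\|g\|_{H^s}$. Similarly, $\||f|*F'\|_2\le \||f|\|_r\|F'\|_2\le C\|f\|_{H^s}\|g\|_{H^s}$. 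Combining the two terms gives $\|f*g\|_{H^s}\le 2^{s+1}C\|f\|_{H^s}\|g\|_{H^s}$.

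The main obstacle is justifying that property RD controls convolution of \emph{nonnegative} functions and on both sides. For positivity there is nothing to check, since $\||g|\|_{H^s}=\|g\|_{H^s}$ and \eqref{eq:rd} applies to $|g|$ directly. For right convolution, one uses the anti-isomorphism $f\mapsto \check f$, where $\check f(x)=f(x^{-1})$, which preserves both $\|\cdot\|_r$ and $\|\cdot\|_{H^s}$ by symmetry of $S$.

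Finally, for density, a Cauchy sequence in $H^s$ is Cauchy in $\|\cdot\|_r$, so the product extends continuously. The extended product agrees with the product in $\Cr$, and the limits are identified as functions through the injection $\Cr\hookrightarrow\ell^2(G)$, $a\mapsto a\delta_e$. This identifies $H^s(G)$, as a set of functions, with a subalgebra of $\Cr$.
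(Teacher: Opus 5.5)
Your proof is correct and is essentially the standard argument (Lafforgue's) behind the fact, which the paper cites rather than proves: you split the weight via $(1+\ell(x))^s\le 2^s\bigl((1+\ell(y))^s+(1+\ell(y^{-1}x))^s\bigr)$, apply \eqref{eq:rd} to $|g|$ acting by right convolution and to $|f|$ acting by left convolution to get $\|f*g\|_{H^s}\le 2^{s+1}C\|f\|_{H^s}\|g\|_{H^s}$, and your rescaling remark is what yields genuine submultiplicativity. One small correction: $\|\check h\|_r=\|h\|_r$ does not come from the symmetry of $S$ (that only gives $\|\check h\|_{H^s}=\|h\|_{H^s}$); for $h\ge 0$, the only case you use, it follows from $\check h=h^*$ and $\|\lambda(h)^*\|=\|\lambda(h)\|$.
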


The second fact is due to Connes but appears for print in the first time in a paper of Jolissaint \cite[Theorem A]{jolissaint2}:

\begin{fact}\label{fact2}
Suppose that $G$ has property RD.  Then $H^\infty(G)$ is stable under holomorphic functional calculus in $\Cr$:  if $z\in H^\infty(G)$, $U\subseteq \mathbb{C}$ is an open neighborhood of $\spec(z)$, and $f:U\to \mathbb{C}$ is holomorphic, then $f(z)\in H^\infty(G)$.
\end{fact}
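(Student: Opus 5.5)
The strategy is to reduce holomorphic stability to \emph{spectral invariance}: for every $t\ge s$ (with $(C,s)$ RD constants), an element of the Banach algebra $H^t(G)$ of Fact \ref{fact1} is invertible in $H^t(G)$ iff it is invertible in $\Cr$. Granting this, for $z\in H^\infty(G)$ we get $\spec_{H^t}(z)=\spec(z)$ for every $t$. The Cauchy integral $f(z)=\frac{1}{2\pi i}\oint_\gamma f(\zeta)(\zeta-z)^{-1}\,d\zeta$, over a contour $\gamma\subseteq U$ surrounding $\spec(z)$, then converges in every Banach algebra $H^t(G)$. This is because $\zeta\mapsto(\zeta-z)^{-1}$ is continuous into $H^t(G)$ on $\gamma$. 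All these integrals agree with the one computed in $\Cr$, since the inclusions $H^t(G)\hookrightarrow\Cr$ are continuous. Hence $f(z)\in\bigcap_t H^t(G)=H^\infty(G)$.

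To get spectral invariance, I will show that spectral radii agree: $r_{H^t}(x)=r_r(x)$ for $x\in H^t(G)$ with $t>s$ large. The first ingredient is a convolution estimate. Using $\ell(gh)\le\ell(g)+\ell(h)$ and $(a+b)^t\le 2^{t-1}(a^t+b^t)$, with $w=(1+\ell)^t$, we have pointwise
\[
|f*g|\,w\le 2^{t-1}\big((|f|w)*|g|+|f|*(|g|w)\big).
\]
Take $\ell^2$-norms and apply \eqref{eq:rd} to $|f|$ and $|g|$, which have the same $H^s$-norms as $f$ and $g$; one factor is convolved on the right, and right convolution has the same RD bound because $\ell(g^{-1})=\ell(g)$. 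This yields
\[
\|f*g\|_{H^t}\le 2^{t-1}C\big(\|f\|_{H^t}\|g\|_{H^s}+\|f\|_{H^s}\|g\|_{H^t}\big).
\]
The second ingredient is H\"older's inequality on the weights:
\[
\|f\|_{H^s}\le\|f\|_2^{1-\theta}\|f\|_{H^t}^{\theta},\qquad \theta=s/t<1,
\]
together with $\|f\|_2\le\|f\|_r$ (apply $\lambda(f)$ to $\delta_e$). Putting $a_n=\|x^n\|_{H^t}$ and $r_n=\|x^n\|_r$, these combine to
\[
a_{2n}\le K\,a_n^{1+\theta}r_n^{1-\theta}.
\]
Taking logarithms and dividing by $2n$, the exponential growth rate $\alpha=\limsup a_n^{1/n}$, computed along powers of $2$ (legitimate since $a_n^{1/n}$ converges in a Banach algebra), satisfies
\[
\log\alpha\le\tfrac{1+\theta}{2}\log\alpha+\tfrac{1-\theta}{2}\log r_r(x).
\]
Since $\theta<1$, this gives $\alpha\le r_r(x)$; the reverse inequality is automatic from the continuous inclusion.

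Equal spectral radii then give inverse-closedness. Note that $H^t(G)$ is closed under the involution $f^*(g)=\overline{f(g^{-1})}$ because $\ell(g^{-1})=\ell(g)$. If $x\in H^t(G)$ is invertible in $\Cr$, then $x^*x$ is positive and invertible with spectrum in some $[a,b]$, $a>0$. Consequently $y=1-x^*x/b$ satisfies $r_r(y)<1$, hence $r_{H^t}(y)<1$. The Neumann series then inverts $x^*x$ in $H^t(G)$, and $x^{-1}=(x^*x)^{-1}x^*\in H^t(G)$.

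I expect the main obstacle to be the spectral radius comparison. The bookkeeping in the recursion has to be done carefully: the constant $K$ must contribute only $O(1/n)$ after taking $n$-th roots, and the case $r_r(x)=0$ should be handled by perturbing. Everything else is standard Banach algebra theory.
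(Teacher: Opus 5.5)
Your argument is correct. It also does something the paper does not: the paper gives no proof of this fact and simply cites it (Connes, via Jolissaint's Theorem~A). Your route is a self-contained Banach-algebra argument in four steps:
\begin{enumerate}
\item The Leibniz-type bound $\|f*g\|_{H^t}\le 2^{t-1}C\big(\|f\|_{H^t}\|g\|_{H^s}+\|f\|_{H^s}\|g\|_{H^t}\big)$, where you correctly use $\ell(g^{-1})=\ell(g)$ to control right convolution.
\item This bound, combined with the interpolation $\|f\|_{H^s}\le\|f\|_2^{1-s/t}\|f\|_{H^t}^{s/t}$ and $\|f\|_2\le\|f\|_r$, gives $a_{2n}\le K a_n^{1+\theta}r_n^{1-\theta}$. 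Hence spectral radii agree in $H^t(G)$ and $\Cr$.
\item $*$-closedness and a Neumann series for $1-x^*x/b$ then give inverse-closedness.
\item The Cauchy integral finishes.
\end{enumerate}

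Your route gives a stronger conclusion than the cited statement: each Banach algebra $H^t(G)$ with $t>s$ is itself inverse-closed in $\Cr$, not merely the Fr\'echet algebra $H^\infty(G)$. This matters for the only use of the fact in the paper, the square root $v=z^{1/2}$ in the proof of Theorem~\ref{thm:main}. Your result already gives $v\in H^{s+1}(G)$, which is all the tail estimate there needs.

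Two minor corrections:
\begin{itemize}
\item The interpolation requires $\theta=s/t<1$, so your opening claim ``for every $t\ge s$'' should read $t>s$. This is harmless, since $H^\infty(G)=\bigcap_{t>s}H^t(G)$.
\item No perturbation is needed when $r_r(x)=0$. If $\alpha>0$, the right-hand side of your recursion tends to $-\infty$ while the left-hand side tends to $\log\alpha$, a contradiction. And any $r_n=0$ already forces $x^n=0$ (because $\|x^n\|_2\le r_n$), hence $a_n=0$.
\end{itemize}
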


We can now prove our main result:

\begin{theorem}\label{thm:main}
Let $G$ be finitely generated with solvable word problem and property RD.  Then the norm on $\Cr$ is computable.  Moreover, the algorithm for computing the norm depends only on a computable presentation for $G$, an algorithm for solving the word problem for $G$, and a pair of RD constants for $G$.
\end{theorem}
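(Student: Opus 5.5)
The plan is to combine the left c.e.\ property already available with a computable sequence of upper bounds that converges to the norm. By the result of Fritz, Netzer, and Thomas \cite{canyou} recalled in the introduction, solvable word problem already yields, uniformly in a presentation and a word-problem algorithm, a computable sequence of lower bounds converging to $\|x\|_r$ for $x\in\Qi[G]$. So it suffices to produce a computable sequence of upper bounds converging to $\|x\|_r$: given $\epsilon$, we run both procedures until a lower and an upper bound are within $\epsilon$.

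For the upper bounds, set $y:=x^*x\in\Qi[G]$, which is self-adjoint with $\|y\|_r=\|x\|_r^2$. By the C$^*$-identity and self-adjointness, $\|y\|_r=\|y^n\|_r^{1/n}$ for every $n$ (or at least for $n$ a power of $2$, which is enough). By \eqref{eq:rd},
\[
\|x\|_r^2=\|y^n\|_r^{1/n}\le \big(C\,\|y^n\|_{H^s}\big)^{1/n}.
\]
By Lemma \ref{sobolevcomp}, the rational number $\|y^n\|_{H^s}^2$ is computable from $x$ and $n$, since $y^n\in\Qi[G]$ is computed symbolically. Rational upper approximations of the right-hand side, and of its square root, are therefore computable uniformly in $n$. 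Everything depends only on the presentation, the word-problem algorithm, and $(C,s)$, as claimed.

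It remains to show that $\inf_n (C\|y^n\|_{H^s})^{1/n}=\|y\|_r$, and I expect this to be the main obstacle. By Fact \ref{fact1}, $H^s(G)$ is a Banach algebra. Its norm may only be submultiplicative up to a constant $K$, but the Gelfand spectral radius formula still holds in that case. Hence
\[
\lim_n \big(C\|y^n\|_{H^s}\big)^{1/n}=r_{H^s(G)}(y),
\]
the spectral radius of $y$ computed in $H^s(G)$. We must show that $r_{H^s(G)}(y)\le r_{\Cr}(y)=\|y\|_r$; the reverse inequality is automatic from the displayed bound.

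For this I would use Fact \ref{fact2}. Stability of $H^\infty(G)$ under holomorphic functional calculus implies that it is inverse-closed in $\Cr$, so $\spec_{H^\infty(G)}(y)=\spec_{\Cr}(y)$. Next, for every $t\ge s$ the pair $(C,t)$ is again a pair of RD constants, since $\|\cdot\|_{H^s}\le\|\cdot\|_{H^t}$. So each $H^t(G)$ is a Banach algebra by Fact \ref{fact1}, and these algebras form a decreasing chain with intersection $H^\infty(G)$. Now suppose $\lambda-y$ is invertible in $H^\infty(G)$. Its inverse lies in $H^s(G)$, so $\lambda-y$ is invertible in $H^s(G)$. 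This gives
\[
\spec_{H^s(G)}(y)\subseteq\spec_{H^\infty(G)}(y)=\spec_{\Cr}(y),
\]
and hence $r_{H^s(G)}(y)\le\|y\|_r$. Consequently the computable upper bounds $(C\|y^n\|_{H^s})^{1/n}$ converge to $\|x\|_r^2$, and taking square roots finishes the argument.
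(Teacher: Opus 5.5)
Your argument is correct, but it follows a different route from the paper. The paper never writes down explicit upper bounds. It enumerates certificates $(t,w)$ with $0<t<q$, $w\in\Qi[G]$ and $t^2+C\|t^2-x^*x-w^*w\|_{H^s}<q^2$. For completeness it uses Fact~\ref{fact2} with the square root to put $v=(t^2-x^*x)^{1/2}$ in $H^\infty(G)$. It then approximates $v$ in $\|\cdot\|_{H^s}$ by truncation (controlled by $\|v\|_{H^{s+1}}$) and rationalization, and finally uses Fact~\ref{fact1} to make $\|t^2-x^*x-w^*w\|_{H^s}$ small.

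You instead give the explicit sequence $(C\|(x^*x)^n\|_{H^s})^{1/(2n)}$. You obtain convergence from Gelfand's formula in $H^s(G)$ together with $\spec_{H^s(G)}(y)\subseteq\spec_{\Cr}(y)$. You correctly extract this inclusion from Fact~\ref{fact2} applied to $w\mapsto(\mu-w)^{-1}$ for $\mu\notin\spec_{\Cr}(y)$. Both proofs rest on the same two facts. Yours trades the unbounded search and the approximation argument for a concrete formula, while the paper's exhibits a positivity certificate $t^2-x^*x\approx w^*w$ witnessing each upper bound.

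In fact the step you flagged as the main obstacle is elementary and needs neither fact. Let $R$ bound the length of every element of the support of $y$. Then $y^n$ is supported in $B_{nR}$, so $\|y^n\|_{H^s}\le(1+nR)^s\|y^n\|_2\le(1+nR)^s\|y^n\|_r=(1+nR)^s\|y\|_r^n$, using $\|f\|_2=\|\lambda(f)\delta_e\|\le\|f\|_r$. Hence $\|y\|_r\le(C\|y^n\|_{H^s})^{1/n}\le C^{1/n}(1+nR)^{s/n}\|y\|_r$. This gives an explicit rate of convergence. Moreover, dividing your upper bounds by the computable factor $C^{1/n}(1+nR)^{s/n}$ yields matching lower bounds, so even the appeal to the left c.e.\ result from the introduction becomes unnecessary.
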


\begin{proof}
As mentioned in the introduction, one can always compute a sequence of lower bounds converging to the norm on $\|\cdot\|_r$, so we only need to explain the existence of computable upper bounds.

Given $x\in \mathbb{Q}(i)[G]$, we seek to effectively enumerate the rational upper bounds of $\|x\|_r$.  Let $(C,s)$ be RD constants for $G$.  Given $q\in\Q^{>0}$, search over pairs $(t,w)$ with $t\in\Q$, $0<t<q$, and $w\in\Qi[G]$, computing
\[
r\ :=\ t^2-x^*x-w^*w\ \in\ \Qi[G],
\]
and accepting if
\begin{equation}\label{eq:cert}
t^2+C\,\|r\|_{H^s}\ <\ q^2 .
\end{equation}
Note that \eqref{eq:cert} is equivalent to the inequality $C^2\|r\|_{H_s}^2<(q^2-t^2)^2$, which is a comparison between two rational numbers that is, by Lemma \ref{sobolevcomp}, a decidable test.  

We claim that this algorithm enumerates all rational upper bounds of $\|x\|_r$.  To see this, first suppose that \eqref{eq:cert} holds. In $\Cr$, we have
\[
x^*x\ =\ t^2-w^*w-r\ \le\ t^2+\|r\|_r.
\]
By \eqref{eq:rd}, $\|r\|_r\le C\|r\|_{H^s}$, so $\|x\|_r^2=\|x^*x\|_r\le t^2+C\|r\|_{H^s}<q^2$, giving $\|x\|_r<q$.

Conversely, suppose $\|x\|_r<q$ and choose $t\in\Q$ with $\|x\|_r<t<q$. Set $z:=t^2-x^*x\in\Qi[G]$, so
\[
\spec_{\Cr}(z)\ \subseteq\ \big[\,t^2-\|x\|_r^2,\ t^2\,\big]\ \subseteq\ (0,\infty).
\]
Consequently, $z$ is positive and invertible in $\Cr$.  Set $v:=z^{1/2}\in \Cr$.  By Fact \ref{fact2} applied to some open $U\subseteq \mathbb{C}\setminus (-\infty,0]$ containing $\spec(z)$ and$f:U\to \mathbb{C}$ the principal branch of the square root function, we in fact have that $v\in H^\infty(G)$.

Set $B_n:=\{g:\ell(g)\le n\}$.  
We then have
\begin{alignat}{2}
\|v-v1_{B_n}\|_{H^{s}}^2&=\sum_{\ell(g)>n}|v(g)|^2(1+\ell(g))^{2s}\notag \\ \notag
    &\le(1+n)^{-2}\!\!\sum_{\ell(g)>n}\!\!|v(g)|^2(1+\ell(g))^{2s+2}\\ \notag
    &\le(1+n)^{-2}\|v\|_{H^{s+1}}^2.\notag
\end{alignat}
Here, we used the fact that $(1+\ell(g))^{2s}\le(1+n)^{-2}(1+\ell(g))^{2s+2}$ for $\ell(g)>n$.  As a result, we have that $\|v-v1_{B_n}\|_{H^{s}}\to 0$ as $n\to \infty$.

For $h$ supported on the finite set $B_n$, one has
\begin{equation}
\|h\|_{H^{s}}\ \le\ (1+n)^{s}\,\|h\|_{\ell^2},\notag
\end{equation}
so replacing the coefficients of $v1_{B_n}$ by elements of $\Qi$ within $\eta(1+n)^{-s}|B_n|^{-1/2}$ of them changes $\|\cdot\|_{H^{s}}$ by at most $\eta$.  Combining these two facts, we see that $v$ can be approximated arbitrarily well in the $\|\cdot\|_{H^{s}}$ norm by elements of $\Qi[G]$.

Finally, for $w\in \Qi[G]$, we have:
\begin{alignat}{2}
\|z-w^*w\|_{H^s}&=\|v^*v-w^*w\|_{H^s} \notag \\ \notag
    &\le\|v^*(v-w)\|_{H^s}+\|(v-w)^*w\|_{H^s}\\ \notag
&\le \big(\|v\|_{H^{s}}+\|w\|_{H^{s}}\big)\|v-w\|_{H^{s}}.\notag
\end{alignat}
Here, we used Fact \ref{fact1} and the easy fact that $\|h^*\|_{H^2}=\|h\|_{H^s}$ for all $h\in \mathbb{C}[G]$.  Since $t^2<q^2$, by the previous paragraph we may find $w\in \Qi[G]$ so that $t^2+C\|z-w^*w\|_{H^s}<q^2$, whence \eqref{eq:cert} is satisfied and the search terminates.
\end{proof}

By a result of Lafforgue \cite{lafforgue}, discrete cocompact lattices in $\operatorname{SL}_3(\mathbb{R})$ have property RD, whence our main theorem covers such groups.  On the other hand, by a result of Jolissaint \cite[Theorem B]{jolissaint}, $\operatorname{SL}_3(\mathbb{Z})$ does not have property RD.  It would therefore be interesting to settle the following:

\begin{question}
Is the norm on $C^*_r(\operatorname{SL}_3(\mathbb{Z})$ computable?
\end{question}

The analogous question for the universal group \cstar-algebra $C^*(\operatorname{SL}_3(\mathbb{Z})$ was posed by Fritz, Netzer, and Thom \cite[Question 2.6]{canyou} and still remains open.

\section{A noncomputable presentation of $C^*_r(G)$}

In this section, we prove the following theorem:

\begin{theorem}
There is a finitely presented group $G$ with solvable word problem for which the norm on $C^*_r(G)$ is not computable.
\end{theorem}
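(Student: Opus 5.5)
The plan is to encode the halting problem into a direct sum, and to use Kesten's criterion to turn that encoding into a norm computation. For each $n\in\N$ I will choose a finitely generated group $G_n$ with solvable word problem, uniformly in $n$, such that $G_n$ is amenable if the $n$th Turing machine does not halt and nonamenable if it does. Concretely, I would take $G_n=\mathbb{Z}$ or some fixed amenable group if machine $n$ never halts, and $\mathbb{F}_2$ if it does. To keep the word problem uniformly solvable, I would present $G_n$ as a quotient of $\mathbb{F}_2$ in which relations are added at stage $k$ exactly when the machine has not halted by time $k$. One candidate is $\langle a,b \mid [a,b^k a b^{-k}]=1 \text{ for } k<T_n\rangle$, where $T_n$ is the halting time (infinite if the machine never halts). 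Its word problem is decidable because any given word only involves boundedly many $k$; if the machine has not halted by then, we can treat it as a non-halting case locally. The delicate point is to pick relations so that the infinite version is amenable (for example, lamplighter-like metabelian $\mathbb{Z}\wr\mathbb{Z}$) while every finite truncation is nonamenable, for instance because it is an HNN/amalgam containing free subgroups.

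Next I form $H=\bigoplus_n G_n$, or better a free-product-style combination that remains finitely generated, and embed it into a finitely presented group with solvable word problem. By the Clapham/Higman embedding with solvable word problem preserved, a recursively presented group with solvable word problem embeds in a finitely presented one with solvable word problem. The key observation is that $C^*_r$ of a subgroup sits inside $C^*_r(G)$ isometrically, so norms of elements of $\mathbb{Q}(i)[G_n]\subseteq\mathbb{Q}(i)[G]$ are the reduced norms computed in $G_n$. I also need the images of the generators $a_n,b_n$ of $G_n$ in $G$ to be computable words in $n$, which the construction provides.

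Finally I apply Kesten's theorem. Let $\mu_n=\frac14(a_n+a_n^{-1}+b_n+b_n^{-1})$. Then $\|\mu_n\|_r=1$ if $G_n$ (more precisely $\langle a_n,b_n\rangle$) is amenable, and $\|\mu_n\|_r<1$ otherwise. If $G_n=\mathbb{F}_2$ exactly in the halting case, the value is $\sqrt3/2$; more generally I would want a uniform gap, and I would arrange this by making the halting case contain a free group in a controlled way, or by using the exact value $\sqrt3/2$. A computable norm would let us compute $\|\mu_n\|_r$ to within $1/10$ and thereby decide halting, which is a contradiction.

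I expect the main obstacle to be the first step: finding uniformly given groups, with uniformly solvable word problem, whose amenability flips exactly at halting, together with a uniform spectral gap. The fallback is to avoid the uniform gap altogether. Since the non-halting case gives exactly norm $1$, I would instead use the computable upper bounds to semi-decide $\|\mu_n\|_r<1$, i.e.\ halting, and the left-c.e.\ lower bounds to semi-decide non-halting via some element whose norm exceeds a threshold only when the machine does not halt.
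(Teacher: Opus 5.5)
There is a genuine gap: your encoding points the wrong way. You make the non-halting case amenable ($\|\mu_n\|_r=1$) and the halting case nonamenable ($\|\mu_n\|_r<1$).

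Whenever the word problem is uniformly solvable, $\|\mu_n\|_r=\sup_k p_{2k}(G_n)^{1/2k}$. Here $p_{2k}(G_n)$ is the return probability of the simple random walk, a rational number you can compute from the word problem. So the numbers $\|\mu_n\|_r$ are \emph{automatically} uniformly left c.e.; this is the Fritz--Netzer--Thom observation recalled in the introduction.

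Consequently, the uniform gap you need (halting $\Rightarrow \|\mu_n\|_r\le c<1$) cannot exist for \emph{any} such family. For rational $c'\in(c,1)$, the set $\{n:\|\mu_n\|_r>c'\}$ would be a c.e.\ set equal to the complement of the halting set.

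Your own candidate shows this concretely. Even granting that the truncations $\langle a,b\mid [a,b^kab^{-k}],\ k<T\rangle$ are nonamenable with uniformly solvable word problem, they converge to $\mathbb{Z}\wr\mathbb{Z}$ in the space of marked groups. Each $p_{2k}$ is locally determined, so their spectral radii tend to $1$ as $T\to\infty$.

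Without a gap, your scheme yields no contradiction. Computable upper bounds only show that $\{n:\|\mu_n\|_r<1\}$, i.e.\ the halting set, is c.e., and that is already true. The fallback fails for the same reasons. Semi-deciding halting is harmless. An element whose norm exceeds a threshold exactly in the non-halting case cannot exist, again because norms are left c.e.

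The missing idea is that the contradiction must come from the \emph{upper} bounds. So the norm has to drop below $1$ exactly when the machine does \emph{not} halt.

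The paper reverses the roles accordingly. It sets $H_n=\mathbb{F}_2$ if $T_n$ never halts. If $T_n$ first halts at time $t$, it sets $H_n=Q_t$, a finite quotient of $\mathbb{F}_2$ that is injective on the ball of radius $t$; such quotients exist by residual finiteness and can be found by search.

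A word of length $L$ is then decided by running $T_n$ for $L$ steps, and either evaluating in $Q_t$ or freely reducing. This gives the uniform word problem with no delicate presentation. Now $\|m_n\|_r=1$ on the halting set by Kesten, and $\|m_n\|_r=\sqrt3/2$ off it. Computable upper bounds would therefore make the complement of the halting set c.e.

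The rest of your outline matches the paper once the direction is fixed: the isometric inclusion of $C^*_r(H_n)$, and an effective embedding of $\bigoplus_n H_n$ into a finitely presented group with solvable word problem, with computable images of $a_n,b_n$. Since $\bigoplus_n H_n$ is not finitely generated, that embedding step needs more than Clapham's theorem as stated. The paper uses Ould Houcine's results there.
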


\begin{proof}
We first construct a countably generated group $H$ with solvable word problem such that the norm on $C^*_r(H)$ is not computable.  We later explain how to achieve a finitely presented example.

The idea behind the construction of $H$ is very simple:  we let $H=\bigoplus H_n$, where $(H_n)_{n\in \N}$ is a sequence of 2-generated groups $H_n=\langle a_n,b_n\rangle$ satisfying the following hypotheses:
\begin{enumerate}
    \item There is a single algorithm that solves the word problem in each $H_n$.
    \item If $K$ denotes the halting set, then $H_n$ is a finite group if $n\in K$ while $H_n$ is freely generated by $a_n$ and $b_n$ if $n\notin K$.
\end{enumerate}

Assume momentarily that these groups have been constructed satisfying the above hypotheses.  For each $n$, consider $$m_n = (1/4)(a_n+a_n^{-1}+b_n+b_n^{-1}) \in \mathbb{Q}[H_n]\subseteq \mathbb{Q}[H].$$  Note that $\|m_n\|_r$ is the same as whether calculated in $\mathbb{C}[H]$ or $\mathbb{C}[H_n]$ since the Hilbert space $\ell^2(H)$ decomposes as a direct sum $\ell^2(H)=\bigoplus_{H_ng\in H/H_n} \ell^2(H_ng)$ and the action of an element of $H_n$ on $\ell^2(H)$ is equivalent to a direct sum of copies of the action of the element on $H_n$.

Now if $n\in K$, then $\|m_n\|=1$ by Kesten's criterion of amenability, while if $n\notin K$, then $\|m_n\|_r=\sqrt{3}/2$ by a well-known computation.  It follows that if $\|\cdot\|_r$ were computable, then one could decide the halting problem, a contradiction.

We now explain how to construct the groups $H_n$.  Fix an enumeration $(T_n)_{n\in \N}$ of Turing machines. For each $t\in \N$, effectively choose a finite quotient
$q_t : \mathbb{F}_2 \to Q_t$
that is injective on the ball of radius $t$; this is possible since $\mathbb{F}_2$ is residually finite.  Note also that these surjections can be found effectively by searching through finite groups. We let $a_t,b_t\in Q_t$ denote the images of the free generators of $\mathbb{F}_2$ under $q_t$.

Now define
$H_n:=Q_t$ if $T_n$ first halts at time $t$, while set  $H_n:=\mathbb{F}_2$ if $T_n$ never halts.  By definition, hypothesis (2) above holds.  The following algorithm verifies criterion (1):  given a word $w$ of length $L$ in $a_n,b_n$, do the following:

\begin{enumerate}
    \item[(i)] Freely reduce $w$.
\item[(ii)] Run $T_n$ for $L$ steps.
\item[(iii)] If the run halts at some $t \leq L$, evaluate $w$ in $Q_t$ and check if it is the identity.
\item[(iv)] If the run has not halted by time $L$, then $w$ is trivial exactly when its free reduction is the empty sequence. 
\end{enumerate}

The last step is valid because if $T_n$ eventually halts at a later time $t>L$, then since the map $q_t$ is injective on the $L$-ball, no nontrivial word of length $L$ is mapped to the identity under $q_t$.

It remains to construct a finitely presented group satisfying the conclusion.  By results of A. Ould Houcine \cite[Theorems I and II]{ouldhoucine}, the fact that $Z(H)$ is c.e. and has decidable membership problem (which is proved in a manner analogous to the argument for establishing solvability of the word problem for $H$) implies that $H$ can be embedded in a finitely presented group $G$ with solvable word problem.  Moreover, as the arguments in \cite{ouldhoucine} show, the embedding $H\hookrightarrow G$ is effective and so leads to a computable embedding $C^*_r(H)\hookrightarrow C^*_r(G)$.  As a result, $C^*_r(G)$ does not have computable norm, for otherwise $C^*_r(H)$ would have computable norm.
\end{proof}

\end{document}